\numberwithin{equation}{section}
\newtheorem{theorem}{Theorem}[section]
\newtheorem{proposition}[theorem]{Proposition}
\newtheorem{lemma}[theorem]{Lemma}
\newtheorem{corollary}[theorem]{Corollary}
\theoremstyle{definition}
\newtheorem{definition}[theorem]{Definition}
\theoremstyle{remark}
\newtheorem*{acknowledgements}{Acknowledgements}
\title[Heat equation and stable minimal Morse functions]{Heat equation and stable minimal Morse functions on real and complex projective spaces}
\author{Sebasti\'{a}n Mu\~{n}oz Mu\~{n}oz and Alexander Quintero V\'{e}lez} 
\address{Departamento de Matem\'{a}ticas \\
Universidad del Valle\\
Calle 13 \# 100-00\\
Cali, Colombia}
 \email{munoz.sebastian@correounivalle.edu.co}
\address{Escuela de Matem\'{a}ticas\\
Universidad Nacional de Colombia Sede Medell\'{i}n\\
Calle 59a \# 63-20\\
Medell\'{i}n, Colombia}
 \email{aquinte2@unal.edu.co}
\begin{document}
\subjclass[2010]{37B30, 53C44, 58J35}
\keywords{Heat equation, Laplace-Beltrami operator,  Minimal Morse function, Fubini-Study metric, Stable function}

\begin{abstract}
Following similar results in \cite{rmk} for flat tori and round spheres, in this paper is presented a proof of the fact that, for ``arbitrary'' initial conditions $f_0$, the solution $f_t$ at time $t$ of the heat equation on real or complex projective spaces eventually becomes (and remains) a minimal Morse function. Furthermore, it is shown that the solution becomes stable. 
\end{abstract}

 \maketitle
% \tableofcontents

\section{Introduction}\label{intro}

The heat equation, which is the mathematical model for heat flow, and in its simplest form is written as 
$$\frac{\partial f}{\partial t} = \Delta f ,$$
where $\Delta$ is the Laplace-Beltrami operator, is among the most relevant partial differential equations in mathematics and physics, and it has truly remarkable properties. For instance, if an ``arbitrary" initial condition $f_0$ is subjected to this equation, then the solution $f_t$, for any instant of time $t>0$, will be a $C^{\infty}$ function. In the light of such facts, the heat equation presents itself as a fundamental smoothing process.

In a recent paper \cite{rmk}, a result in this direction was established, further evidencing important smoothing properties of this equation. Given a smooth function $f$ defined on a manifold $M$, an indicator of regularity for $f$ is whether all of its critical points are non-degenerate, which is to say that $f$ is a Morse function. The authors showed (see Theorem $2.1$ and Theorem $3.1$ of that paper) that, at least in the case of $n$-dimensional round spheres or flat tori, ``arbitrary" smooth initial conditions $f_0$ are eventually transformed by the heat equation into minimal Morse functions; Morse functions that have the smallest possible number of critical points in the manifold. 

Investigating the question of whether this is a more general phenomenon presenting itself in other compact Riemannian manifolds, this paper presents  proof of the analogous result for real and complex projective spaces of arbitrary dimension. We will further observe that in these spaces, the heat process endows the solution $f_t$ with a fundamental property called \emph{stability}, which roughly speaking means that functions close to $f_t$ will be  ``identical" to $f_t$ modulo a suitable change of coordinates\footnote{Close in the standard sense of the Whitney $C^{\infty}$ topology, which measures functions by their size and that of their partial derivatives of all orders.}. This is made precise by the following definition.

\begin{definition}
Let $M$ be a compact smooth manifold, and let $f \in C^\infty(M)$. $f$ is said to be \emph{stable} if there exists a neighborhood $W_f$ of $f$ in the Whitney $C^\infty$ topology such that for each $f'\in W_f$ there exist diffeomorphisms $g,h$ such that the following diagram commutes:
$$
  \begin{tikzcd}
    M \arrow{r}{f} \arrow[swap]{d}{g} & \mathbb{R} \arrow{d}{h} \\
    M \arrow{r}{f'} & \mathbb{R}
  \end{tikzcd}
$$
\end{definition}

The corollary to the following fundamental theorem (see \cite[pp. 79-80]{stability}) gives a simple characterization of stable functions which will be key to what follows.   
\begin{theorem}[Stability Theorem] \label{stab}
Let $M$ be a compact smooth manifold, and $f \in C^{\infty}(M)$. Then $f$ is a Morse function with distinct critical values if and only if it is stable.
\end{theorem}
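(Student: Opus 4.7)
The plan is to prove the two implications of Theorem \ref{stab} separately, relying for the forward direction on the Thom-Mather machinery recalled in \cite{stability}. Specifically, I would reduce stability to \emph{infinitesimal stability}: for every $\xi \in C^\infty(M)$ there exist a vector field $X$ on $M$ and a smooth function $\eta \colon \mathbb{R} \to \mathbb{R}$ with $\xi = X(f) + \eta \circ f$. Assuming $f$ is Morse with distinct critical values $c_1,\ldots,c_k$ at isolated critical points $p_1,\ldots,p_k$, I would first use the distinctness of the $c_i$ to choose, by a standard interpolation, a smooth $\eta$ with $\eta(c_i)=\xi(p_i)$ for every $i$, so that $\xi - \eta\circ f$ vanishes at every critical point. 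On each Morse chart around $p_i$, where $f$ takes the quadratic normal form $c_i+\sum_j \epsilon_{ij}x_j^2$, a Hadamard-style division argument produces a local vector field $X_i$ satisfying $X_i(f)=\xi-\eta\circ f$; away from the critical set, $\nabla f\neq 0$ lets one trivially solve $X(f)=\xi-\eta\circ f$. Patching via a partition of unity yields the global vector field $X$ and establishes infinitesimal stability, after which the Thom-Levine theorem promotes this to stability.

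For the converse I would argue by contraposition, treating the two possible failure modes. If $f$ has a degenerate critical point $p$, one exhibits two arbitrarily $C^\infty$-small Morse perturbations $f_1', f_2'$ of $f$ which disagree on the number of critical points near $p$ (such perturbations exist by the classical unfolding of degenerate singularities; e.g.\ if $f$ locally contains an $x^3$ direction, take $f' = f \pm \epsilon x$). Were $f$ stable, both $f_i'$ would be equivalent to $f$ and hence to each other, contradicting the fact that any diffeomorphism pair $(g,h)$ making the square commute induces a bijection between critical points. If instead $f$ is Morse but $f(p_i) = f(p_j)$ for distinct critical points, a small perturbation separates these critical values, while $f'\circ g = h\circ f$ would force $f'(g^{-1}(p_i)) = h(f(p_i)) = h(f(p_j)) = f'(g^{-1}(p_j))$, a contradiction.

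The principal obstacle I anticipate is the local solvability step on the Morse charts: showing that any smooth function vanishing at a non-degenerate critical point of $f$ can be written as $X(f)$ for some smooth local vector field $X$. This relies on a careful Hadamard-type division exploiting the explicit quadratic normal form, together with verifying compatibility across overlapping charts once the partition of unity is introduced. Once infinitesimal stability is secured, the appeal to Thom-Levine is standard but substantial, and amounts to invoking the Mather stability framework detailed in \cite{stability}.
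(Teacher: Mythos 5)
The paper does not actually prove Theorem \ref{stab}; it is quoted from Golubitsky--Guillemin \cite{stability} (pp.\ 79--80), so your proposal is being measured against the textbook argument rather than anything in the text. Your forward direction follows that argument faithfully and correctly: fix $\eta$ by interpolating $\eta(c_i)=\xi(p_i)$ (possible precisely because the $c_i$ are distinct), solve $X(f)=\xi-\eta\circ f$ locally by Hadamard division in Morse coordinates (where the partials $2\epsilon_j x_j$ generate the maximal ideal) and by the implicit function theorem off the critical set, patch with a partition of unity (legitimate since the equation is linear in $X$ once $\eta$ is fixed), and invoke Thom--Levine/Mather to pass from infinitesimal stability to stability. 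That last step is the genuinely deep input, but the paper black-boxes the whole theorem anyway, so citing it is fair.

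The gap is in your converse, specifically the degenerate-critical-point case. Your plan requires producing two arbitrarily small perturbations of $f$ with \emph{different} numbers of critical points near $p$, justified by ``classical unfolding'' and the example of an $x^3$ direction. This does not cover a general degenerate critical point of a merely smooth function: the singularity may have infinite codimension (e.g.\ a flat function such as $e^{-1/x^2}$, or a non-isolated critical set), where finite-determinacy and unfolding theory do not apply, and even when two such local perturbations exist you would still have to control the \emph{global} critical-point counts to contradict the bijection induced by $(g,h)$. The standard and much simpler repair is to run the converse directly rather than by contraposition on failure modes: Morse functions with distinct critical values are dense in $C^\infty(M)$, so if $f$ is stable its stability neighborhood $W_f$ contains such an $f'$, and then $f=h^{-1}\circ f'\circ g$ inherits both properties, since pre- and post-composition with diffeomorphisms preserve non-degeneracy of critical points (the Hessian transforms by congruence and by the nonzero scalar $(h^{-1})'$) and preserve distinctness of critical values. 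Your argument for the equal-critical-values failure mode is fine (modulo the harmless $g$ versus $g^{-1}$ bookkeeping), but it becomes unnecessary once the converse is phrased this way.
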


\begin{corollary} \label{cor}
If $M$ is a smooth manifold, and $f$ is a Morse function with distinct critical values, then there exists a neighborhood of $f$ in the $C^\infty$ topology such that $g$ is a Morse function with distinct critical values and the same number of critical points as $f$ for all $g$ in said neighborhood. In particular, if $M$ is compact (so that the $C^{\infty}$ has the union of all $C^r$ topologies as a basis, each a Banach space \cite{hirsch}), there exist $r$ and $\varepsilon>0$ such that $g$ is a Morse function with distinct critical values and the same number of critical points as $f$ whenever $\|f-g\|_r< \varepsilon$ , with $\| \cdot \|_r$ being a fixed norm for the $C^r$ topology.
\end{corollary}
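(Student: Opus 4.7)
The plan is to obtain the corollary as essentially a direct unpacking of Theorem~\ref{stab}. By hypothesis $f$ is a Morse function with distinct critical values, so Theorem~\ref{stab} supplies a neighborhood $W_f$ of $f$ in the Whitney $C^\infty$ topology such that for every $f' \in W_f$ there exist diffeomorphisms $g\colon M\to M$ and $h\colon \RR\to\RR$ with $f' \circ g = h \circ f$, equivalently $f' = h\circ f\circ g^{-1}$. I would first show that this equivalence forces $f'$ to inherit the Morse-theoretic properties of $f$, and then pass to a $C^r$-description of the neighborhood via compactness of $M$.

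For the first step, I would verify that if $f' = h\circ f\circ g^{-1}$ then $f'$ is automatically a Morse function with distinct critical values and has the same number of critical points as $f$. By the chain rule, a point $x \in M$ is critical for $f'$ exactly when $g^{-1}(x)$ is critical for $f$ (since $h'$ is nowhere zero and $\mathrm{d}g^{-1}$ is invertible at every point); the diffeomorphism $g$ thus puts the critical sets in bijection, which in particular gives equality of their cardinalities. At such a critical point $p = g^{-1}(x)$, the Hessian of $f'$ at $x$ equals $h'(f(p))$ times the pullback of $\mathrm{Hess}(f)(p)$ by the invertible linear map $\mathrm{d}g^{-1}(x)$, hence is again non-degenerate, so $f'$ is Morse. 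Injectivity of $h$ preserves distinctness of critical values, completing the first assertion.

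For the quantitative half, I would invoke the standard fact (as recalled in \cite{hirsch}) that on a compact manifold the Whitney $C^\infty$ topology coincides with the projective limit of the Banach topologies $(C^r(M),\|\cdot\|_r)$, so that a neighborhood basis of any $f$ is given by the balls $\{g : \|f - g\|_r < \varepsilon\}$ for $r \in \NN$ and $\varepsilon > 0$. Applying this to the neighborhood $W_f$ produced above yields $r$ and $\varepsilon > 0$ with $\{g : \|f - g\|_r < \varepsilon\} \subseteq W_f$, and the conclusion follows from the first half.

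The only real obstacle I foresee is a careful bookkeeping of the chain-rule calculation that transports non-degeneracy of the Hessian through composition with $g$ and $h$; once that routine computation is in place, everything substantive has already been packaged into Theorem~\ref{stab}, and the corollary becomes a matter of unwinding definitions.
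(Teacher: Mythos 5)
Your proposal is correct and matches the paper's intent exactly: the paper offers no written proof, presenting the corollary as an immediate consequence of Theorem \ref{stab}, and your unpacking (the equivalence $f'=h\circ f\circ g^{-1}$ preserves Morse-ness via the chain rule at critical points, preserves distinctness of critical values via injectivity of $h$, and puts critical sets in bijection via $g$, followed by the $C^r$-ball description of the $C^\infty$ topology on compact $M$) is precisely that consequence. The only caveat --- shared by the paper's own statement --- is that Theorem \ref{stab} is stated for compact $M$, so the opening clause of the corollary for a general smooth manifold is not literally covered by this argument; this is immaterial here since the corollary is only ever applied to compact manifolds.
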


Now, our precise formulation of an ``arbitrary" smooth initial condition $f_0$ will be that it belongs to a fixed open and dense set $S$ in the  $C^{\infty}$ topology. We can now state the two main results.

\begin{theorem}\label{real} There exists a set $S \subset C^{\infty}(\mathbb{RP}^n)$, that is dense and open in the $C^{\infty}$ topology, such that for any initial condition $f=f_0\in S$, if $f_t$ is the corresponding solution to the heat equation on  $\mathbb{RP}^n$ at time $t$, then there exists $T>0$ such that for $t\geq T$, $f_t$ is a stable minimal Morse function on $\mathbb{RP}^n$ .
\end{theorem}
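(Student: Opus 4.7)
The plan is to reduce the problem to an analysis of the first non-trivial eigenmode of the Laplace--Beltrami operator and then apply Corollary~\ref{cor}. On $\mathbb{RP}^n$, equipped with the quotient metric from the round sphere, the spectrum of $\Delta$ consists of eigenvalues $\lambda_k = 2k(2k+n-1)$ for $k\ge 0$, with eigenspaces $E_k$ arising as restrictions to $S^n$ of the degree-$2k$ harmonic polynomials on $\RR^{n+1}$ (odd-degree spherical harmonics do not descend to $\mathbb{RP}^n$). Writing $P_k$ for the $L^2$-orthogonal projection onto $E_k$, one has $f_t = \sum_{k\ge 0} e^{-\lambda_k t}P_k f_0$. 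The key observation is that $E_1$ is canonically isomorphic to the space of trace-free symmetric $(n+1)\times(n+1)$ matrices via $A \longmapsto \phi_A([x]) := x^T A x$ (for $|x|=1$).

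Next I would define $S \subset C^\infty(\mathbb{RP}^n)$ to be the preimage under $P_1$ of the matrices in $E_1$ having $n+1$ pairwise distinct eigenvalues. This condition cuts out an open and dense subset of the finite-dimensional space $E_1$ (its complement is a proper algebraic subvariety); since $P_1 \colon C^\infty(\mathbb{RP}^n) \to E_1$ is a continuous linear surjection, $S$ is open and dense in the $C^\infty$ topology. A short Lagrange-multiplier computation shows that for $A$ with simple eigenvalues $\mu_1>\cdots>\mu_{n+1}$, the function $\phi_A$ has precisely $n+1$ critical points on $\mathbb{RP}^n$, namely the eigenlines of $A$, all non-degenerate, with indices $\{0,1,\ldots,n\}$ and pairwise distinct critical values $\mu_i$. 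The $\mathbb{F}_2$-Morse inequalities force any Morse function on $\mathbb{RP}^n$ to have at least $n+1$ critical points, so $\phi_A$ is a minimal Morse function with distinct critical values.

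For $f_0 \in S$, the plan is then to study the rescaled function
\[
g_t := e^{\lambda_1 t}\bigl(f_t - P_0 f_0\bigr) = P_1 f_0 + \sum_{k\ge 2} e^{-(\lambda_k - \lambda_1)t}P_k f_0.
\]
Because $f_t = P_0 f_0 + e^{-\lambda_1 t}g_t$ differs from $g_t$ only by an additive constant and a positive scalar, the two functions share the same critical points, the same Morse indices, and critical values of one are distinct if and only if those of the other are. Standard elliptic regularity (Sobolev embedding combined with $\|P_k u\|_{H^s}\lesssim(1+\lambda_k)^{s/2}\|P_k u\|_{L^2}$) then yields, for every $r\ge 0$, a constant $C_r$ with $\|g_t - P_1 f_0\|_{C^r}\le C_r e^{-(\lambda_2-\lambda_1)t}\|f_0\|_{L^2}$ for $t\ge 1$; in particular $g_t \to P_1 f_0$ in every $C^r$ norm.

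To conclude, I would apply Corollary~\ref{cor} to $P_1 f_0$, obtaining $r$ and $\varepsilon>0$ such that any $h$ with $\|h - P_1 f_0\|_r<\varepsilon$ is Morse with distinct critical values and exactly $n+1$ critical points; any $T$ for which $\|g_t - P_1 f_0\|_r<\varepsilon$ holds whenever $t\ge T$ will do, and $f_t$ will inherit these properties from $g_t$. The step I expect to be the main obstacle is the second one: verifying the critical-point count for a generic element of $E_1$. One must be careful that the antipodal identification $S^n \to \mathbb{RP}^n$ glues the $2(n+1)$ critical points of $\phi_A$ on $S^n$ into exactly $n+1$ non-degenerate critical points on the quotient (with indices and values preserved), and separately one must set up clean quantitative $C^r$ tail estimates so that the spectral gap $\lambda_2-\lambda_1$ genuinely drives $g_t \to P_1 f_0$.
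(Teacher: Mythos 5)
Your proposal is correct and follows essentially the same route as the paper: define $S$ by requiring the first nontrivial eigenmode $P_1 f_0$ to be a traceless quadratic form with simple spectrum, show by the eigenline computation that such forms are minimal Morse functions on $\mathbb{RP}^n$ with distinct critical values, establish that this locus is open and dense, and transfer the conclusion to $f_t$ for large $t$ via the spectral gap and Corollary \ref{cor}. The only divergences are technical rather than structural: you obtain the $C^r$ tail bound from Sobolev embedding and $\|P_k u\|_{H^s}\lesssim(1+\lambda_k)^{s/2}\|P_k u\|_{L^2}$ where the paper uses explicit sup-norm estimates for spherical harmonics pulled back through the quotient $S^n\to\mathbb{RP}^n$ (Lemma \ref{lie}), and you justify openness of the simple-spectrum locus by noting its complement is the zero set of the discriminant (a proper algebraic subvariety) where the paper invokes Rouch\'e's theorem; both amount to verifying conditions \textbf{(C1)} and \textbf{(C2)} of Lemma \ref{basic}.
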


\begin{theorem} \label{complex}
There exists a set $S \subset C^{\infty}(\mathbb{CP}^n)$, that is dense and open in the $C^{\infty}$ topology, such that for any initial condition $f_0\in S$, if $f_t$ is the corresponding solution to the heat equation on  $\mathbb{CP}^n$  at time $t$, then there exists $T>0$ such that for $t\geq T$, $f_t$ is a stable minimal Morse function on  $\mathbb{CP}^n$. 
\end{theorem}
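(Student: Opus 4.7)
The plan is to use the spectral decomposition of the heat semigroup on $(\mathbb{CP}^n, g_{FS})$ and to argue that, for generic smooth $f_0$, the rescaled solution $e^{\lambda_1 t}(f_t - \overline{f_0})$ converges in $C^\infty$ to the $L^2$-projection of $f_0$ onto the first non-trivial eigenspace of $-\Delta$, and that this limit is itself already a stable minimal Morse function. The Stability Theorem and its corollary will then transfer the stable minimal Morse property back to $f_t$ for all sufficiently large $t$.

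First I would expand any smooth $f_0$ as $f_0 = \sum_{k\geq 0} P_k f_0$, where $P_k$ is the $L^2$-orthogonal projection onto the eigenspace $E_k$ associated to the $k$-th distinct eigenvalue $\lambda_k$ of $-\Delta$ (with $\lambda_0 = 0$), and write the heat solution as $f_t = \sum_k e^{-\lambda_k t} P_k f_0$. Elliptic regularity, together with the rapid decay of $\|P_k f_0\|_{L^2}$ inherited from the smoothness of $f_0$, makes this series converge in every $C^r$ norm and ensures that, after subtracting the constant $P_0 f_0$ and multiplying by $e^{\lambda_1 t}$, the tail $R_t := \sum_{k\geq 2} e^{-(\lambda_k-\lambda_1)t} P_k f_0$ tends to $0$ in every $C^r$ norm as $t\to\infty$.

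Next I would identify $E_1$ explicitly. For the Fubini-Study metric, a standard computation (via the Hopf fibration $S^{2n+1}\to\mathbb{CP}^n$, or via the representation theory of $\mathrm{SU}(n+1)$) shows that $E_1$ consists precisely of the real-valued functions $f_A(z) = z^\ast A z / \|z\|^2$ as $A$ ranges over the traceless Hermitian $(n+1)\times(n+1)$ matrices. When $A$ has $n+1$ pairwise distinct eigenvalues, the critical points of $f_A$ are exactly the projective classes of its eigenvectors, the critical values are the eigenvalues of $A$ (hence distinct), and a Hessian computation in an affine chart shows each critical point is non-degenerate. Since the total Betti number of $\mathbb{CP}^n$ equals $n+1$, such an $f_A$ is a minimal Morse function with distinct critical values, and therefore stable by Theorem \ref{stab}. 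I would then take $S$ to be the set of smooth $f_0$ whose projection $P_1 f_0$ has the form $f_A$ with $A$ having $n+1$ distinct eigenvalues; the openness and density of this condition on $A$, combined with the continuity and surjectivity of the linear map $P_1:C^\infty(\mathbb{CP}^n)\to E_1$, make $S$ open and dense in the $C^\infty$ topology.

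Finally, for $f_0\in S$, set $\tilde f_t := e^{\lambda_1 t}(f_t - P_0 f_0) = P_1 f_0 + R_t$; the estimate above gives $\tilde f_t\to P_1 f_0$ in every $C^r$ norm. Corollary \ref{cor} applied to the stable function $P_1 f_0$ then furnishes some $T>0$ such that for all $t\geq T$ the function $\tilde f_t$ is Morse with $n+1$ critical points and distinct critical values. Since $f_t$ differs from $\tilde f_t$ only by an additive constant and a positive multiplicative factor, $f_t$ has the same critical points with the same non-degeneracy and the same distinctness of critical values, and hence is itself a stable minimal Morse function for every $t\geq T$. The main obstacle is the explicit identification of $E_1$ and the verification of the Morse-theoretic properties of a generic $f_A$; once that is settled, the rest is a fairly formal consequence of the spectral argument and of the corollary to the Stability Theorem.
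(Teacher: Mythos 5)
Your proposal is correct and follows essentially the same route as the paper: identify the first eigenspace with traceless Hermitian matrices $A$, show that $f_A$ is a stable minimal Morse function exactly when $A$ has distinct eigenvalues, define $S$ by this condition on $P_1 f_0$, and use the exponential decay of the tail together with the corollary to the Stability Theorem. The only point you leave as a sketch that the paper makes explicit is the $C^r$ control of the tail, which it obtains from polynomial-in-$k$ bounds $\|P_k f_0\|_{C^r} = O\bigl((1+\lambda_k^{1/2})^r\bigr)\|P_k f_0\|_{L^2}$ pulled back through the Hopf fibration.
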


The basic strategy for the proof on both spaces may be sketched as follows: On compact Riemannian manifolds, as is well known, the solution has the form
$$f_t=h_0+h_1e^{-\lambda_1 t} +h_2 e^{-\lambda_2 t}+ \cdots,$$
where $\lambda_1<\lambda_2<\cdots$ are the non-trivial eigenvalues for the Laplace-Beltrami operator, and the $h_i$ are the projections of $f_0$ onto the corresponding eigenspaces. The overall idea is to exploit the exponentially decaying terms of the solution to approximate $f_t$ by the first two terms of the sum for large times. Our set $S$ will then consist of those functions for which these first two terms add up to a stable, minimal Morse function. Since stability is by definition a property unchanged by small perturbations, it is apparent that the desired result follows provided that 
\begin{itemize}
\item[(i)] $S$ is dense and open, and
\item[(ii)] we successfully bound the remaining terms to make the approximation work. 
\end{itemize}

In section \ref{basics}, simple conditions for the main result to hold in a manifold will be established, as well as a technical statement that will be needed later to verify the above-stated conditions on the two spaces in question. On the other hand, sections \ref{realsection} and \ref{complexsection} are, respectively, each dedicated to the proofs of Theorems \ref{real} and \ref{complex}.

\section{Basic Results} \label{basics}
The following lemma gives two concrete sufficient conditions for items (i) and (ii) of the introduction to be true, and thereafter makes rigorous the previously sketched strategy to prove (using some simple estimates and the profound characterization of stability given by Theorem \ref{stab}) that the desired theorem will follow on compact Riemannian manifolds in which the eigenvalues grow at least linearly as soon as we have these two sufficient conditions. 

\begin{lemma}\label{basic} Let $M$ be a compact, connected Riemannian manifold, let $0=\lambda_0<\lambda_1 < \lambda_2 < \cdots$ be the distinct eigenvalues for the Laplace-Beltrami operator and let $\mathcal{B}=\{\varphi_i \mid 1 \leq i \leq d_1\}$ be any basis (not necessarily orthonormal) for the $\lambda_1$-eigenspace. Suppose the eigenvalues grow at least linearly\footnote{By this we mean that there exists $r>0$ such that $\lambda_{j}>rj.$}, that the $0$-eigenspace is trivial\footnote{This means that it consists only of constant functions. This is known to hold on real and complex projective spaces, basically because it holds on the sphere. This condition is not essential at all; see the next footnote.}, and the following two conditions hold:
\begin{itemize}
\item[\bf(C1)] The set $B$ of $d_1$-tuples $(c_1,\ldots,c_{d_1})$ such that $\sum_i c_i \varphi_i$ is a minimal Morse function on $M$ with distinct critical values is an open dense subset of $\mathbb{R}^{d_1}$.
\item[\bf(C2)] For each $f\in C ^{\infty}(M)$, there exist $N$, $C$ such that the projection $h_j=\pi_j(f)$ of $f$ onto the $j$th eigenspace satisfies
$$\|h_j\|_r \leq C(1+j^N).$$
\end{itemize}
Then there exists a set $S \subset C^{\infty}(M)$, that is dense and open in the $C^{\infty}$ topology, such that for any initial condition $f_0\in S$, if $f_t$ is the corresponding solution to the heat equation on  $M$ at time $t$, then there exists $T>0$ such that for $t\geq T$, $f_t$ is a minimal Morse function with distinct critical values on $M$.
\end{lemma}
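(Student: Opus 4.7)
The plan is to define $S$ as the set of $f_0 \in C^\infty(M)$ whose projection $h_1 = \pi_1(f_0)$ onto the $\lambda_1$-eigenspace, written in the basis $\mathcal{B}$ as $h_1 = \sum_i c_i \varphi_i$, has coefficient tuple $(c_1,\dots,c_{d_1}) \in B$. By the spectral decomposition of the heat flow,
$$f_t \;=\; h_0 + e^{-\lambda_1 t} h_1 + R_t, \qquad R_t := \sum_{j\geq 2} e^{-\lambda_j t} h_j,$$
and since adding a constant and multiplying by a positive scalar preserve the Morse property, the number of critical points, and the distinctness of critical values, the idea is to show that $R_t$ decays so fast in a fixed $C^r$ norm that, for $t$ large, the rescaled function $\widetilde f_t := e^{\lambda_1 t}(f_t - h_0) = h_1 + e^{\lambda_1 t} R_t$ lies in a small enough $C^r$-neighborhood of $h_1$ for Corollary \ref{cor} to take over.

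Openness and density of $S$ follow directly from (C1). Identifying the $\lambda_1$-eigenspace with $\mathbb{R}^{d_1}$ via $\mathcal{B}$, the coefficient map $\pi_1 \colon C^\infty(M) \to \mathbb{R}^{d_1}$ is continuous, so $S = \pi_1^{-1}(B)$ is open. For density, given any $f_0$ and any $C^\infty$-neighborhood $U$ of it, density of $B$ in $\mathbb{R}^{d_1}$ lets me perturb $\pi_1(f_0)$ by an arbitrarily small element of the finite-dimensional $\lambda_1$-eigenspace (which is automatically $C^\infty$-small, since all Whitney seminorms are equivalent on a finite-dimensional subspace of smooth functions) to land inside $B$, yielding an element of $S \cap U$.

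The analytic heart of the argument is the tail estimate. Applying (C2) to $f_0$ produces $C,N$ with $\|h_j\|_r \leq C(1+j^N)$; combining with the linear growth $\lambda_j > rj$ gives
$$e^{\lambda_1 t}\|R_t\|_r \;\leq\; \sum_{j\geq 2} \|h_j\|_r\, e^{-(\lambda_j - \lambda_1)t} \;\leq\; C \sum_{j\geq 2} (1+j^N)\, e^{-(rj - \lambda_1)t},$$
which, once $t$ is large enough for the exponential to dominate the polynomial factor (e.g., once $rt/2$ exceeds some threshold depending on $N$ and $\lambda_1$), is bounded by an elementary geometric series whose sum tends to $0$ as $t \to \infty$. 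Hence $\widetilde f_t \to h_1$ in the $C^r$ topology.

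To conclude, Corollary \ref{cor} applied to the minimal Morse function $h_1$ with distinct critical values supplies $\varepsilon > 0$ such that any $g$ with $\|g - h_1\|_r < \varepsilon$ is a minimal Morse function with distinct critical values. Choose $T$ so that $\|\widetilde f_t - h_1\|_r < \varepsilon$ for all $t \geq T$; then $\widetilde f_t$ has the desired form, and therefore so does $f_t = h_0 + e^{-\lambda_1 t}\widetilde f_t$, since the map $x \mapsto h_0 + e^{-\lambda_1 t}x$ is an orientation-preserving affine bijection of $\mathbb{R}$. The only non-bookkeeping step is the tail estimate, which is routine once (C2) is available; its whole point is to show that the essentially finite-dimensional competition between $h_1$ and $h_2$ for the shape of $f_t$ is decided in favor of $h_1$ for large $t$, notwithstanding the infinitely many higher Fourier modes.
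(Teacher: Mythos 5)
Your construction of $S$, the openness and density arguments, and the reduction to Corollary \ref{cor} via the rescaled function $e^{\lambda_1 t}(f_t-h_0)$ all coincide with the paper's proof; the one place where your argument breaks is the tail estimate. Applying the growth hypothesis $\lambda_j>rj$ at $j=1$ forces $r<\lambda_1$, so the $j=2$ term of your majorant,
$$C(1+2^N)\,e^{-(2r-\lambda_1)t},$$
tends to $0$ only if $2r>\lambda_1$. That inequality does hold for the projective spaces (where the spectrum grows quadratically and one may take $r$ just below $\lambda_1$), but it is not a consequence of the hypotheses of the lemma: the spectrum can grow linearly with a small rate $r$ while $\lambda_1$ is large, and then your right-hand side diverges as $t\to\infty$ even though $e^{\lambda_1 t}\|R_t\|_r$ itself does tend to $0$. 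The substitution $\lambda_j-\lambda_1\geq rj-\lambda_1$ is simply too lossy for the low modes, and the subsequent geometric-series step inherits the problem: if one absorbs the polynomial factor by sacrificing half the exponent, the leading term becomes $e^{(\lambda_1-r)t}$ with $\lambda_1-r>0$, which always diverges.

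The repair is exactly what the paper does: first extract the factor $e^{(\lambda_1-\lambda_2)t}$, which tends to $0$ because $\lambda_2>\lambda_1$, and only then invoke the linear growth to show that the remaining series $\sum_{j\geq 2}(1+j^N)e^{(\lambda_2-\lambda_j)t}$ converges and is non-increasing in $t$ (all its exponents are $\leq 0$), hence bounded for $t\geq t_0>0$. With that one-line change your argument is complete. Two cosmetic points: you use $r$ both for the growth rate and for the order of the $C^r$ norm, which makes the estimate harder to parse; and the continuity of $\pi_1\colon C^\infty(M)\to\mathbb{R}^{d_1}$ that you assert deserves the one-line justification the paper gives, namely $\|\cdot\|_{L^2}=O(\|\cdot\|_0)$ on a compact manifold together with the continuity of the change of basis from the Fourier coefficients to the $\mathcal{B}$-coordinates.
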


\begin{proof}
Let $S$ be the set of functions $f$ whose projection $\pi_1(f)$ onto the $\lambda_1$-eigenspace is a minimal Morse function with distinct critical values. Let $f \in S$. By compactness, $\| \cdot \|_{L^2(M)} = O(\| \cdot \|_{0}) $, so functions in a sufficiently small neighborhood $U$ of $f$ in the $C^0$ topology (which is contained in the $C^{\infty}$ topology) will have its Fourier coefficients (with respect to any fixed orthonormal basis) as close as desired to those of $f$. Observing that the coefficients with respect to $\mathcal{B}$ are continuous functions of the Fourier coefficients for the $\lambda_1$-eigenspace, condition {\bf(C1)} implies that if $U$ is small enough, $U \subset S$, hence $S$ is open.  Now let $f\in C^{\infty}(M)$, and let $g$ be obtained from $f$ such that $\pi_i(g)=\pi_i(f)$ ($i \neq 1$) and $\pi_1(g)$ comes from slightly modifying the coefficients of $\pi_1(f)$ with respect to $\mathcal{B}$ so that $\pi_1(g)$ is a minimal Morse function with distinct critical values (this is again possible by condition {\bf(C1)}). By compactness of $M$, if the modification is slight enough, $g$ will be as close as desired to $f$ in any $C^k$ (and so in the $C^{\infty}$) topology. Thus, $S$ is dense. Next we check that if $f\in S$ and $f=h_0+h_1+\cdots$ with $h_i=\pi_i(f)$, then $f_t=h_0+e^{-\lambda_1t} h_1 + \cdots$ is a Morse minimal function with distinct critical values for large enough $t$. Since $h_0$ is constant it suffices to show the same for $(f_t-h_0)e^{\lambda_1 t}=h_1 +  e^{(\lambda_1- \lambda_2)t} h_2+ \cdots$, and by Corollary \ref{cor} it is enough to prove that for each $k$, $\|(f_t-h_0)e^{\lambda_1 t}-h_1\|_k \rightarrow 0$ as $t \rightarrow \infty$. One has
\begin{align*} 
\|(f_t-h_0)e^{\lambda_1 t}-h_1\|_k =& \|h_2 e^{(\lambda_1-\lambda_2)t} + \cdots\|_k \\
=& e^{(\lambda_1-\lambda_2)t} \left\|\sum_{j\geq 2} e^{(\lambda_2-\lambda_j)t}h_j \right\|_k \\
\leq& e^{(\lambda_1-\lambda_2)t} \sum_{j\geq 2} e^{(\lambda_2-\lambda_j)t}\|h_j\|_k \\
\leq &  e^{(\lambda_1-\lambda_2)t}C\sum_{j\geq 2} (1+j^N)  e^{(\lambda_2-\lambda_j)t}. 
 \end{align*}
Because the $\lambda_j$ grow at least linearly, the series on the right is clearly convergent and a decreasing function of $t$, and the first factor tends to zero as $t \rightarrow \infty$, so this completes the proof.\footnote{It is apparent that, at the cost of making the notation more unwieldly, one can prove completely analogous conditions in which one approximates $f_t$ by any finite-dimensional space of eigenfuctions instead of just the first non-trivial one.} 
\end{proof} 

Since the real and complex projective spaces, besides conditions {\bf(C1)} and {\bf(C2)}, are easily seen to satisfy the hypotheses of the above lemma,\footnote{The growth property for the eigenvalues is obvious because they are integers; this fact will be remarked later.} the remaining majority of this paper is dedicated to proving that the aforementioned conditions hold on these spaces, from which the theorems will follow. We end this section with a technical statement that will be needed later for this purpose.

\begin{lemma} \label{lie}
Let $M$ be a smooth manifold and let $G$ be a Lie group acting smoothly, freely and properly on $M$. Let $h \in C^{\infty}(M)$ be constant on each $G$-orbit, so that it descends to $\overline{h} \in C^{\infty}(M/G)$. Then there exist smooth coordinates $(x_1,\ldots,x_n, y_1,\ldots, y_k)$ for $M$ such that $(y_1,\ldots,y_k)$ are coordinates for $M/G$ and 
$$
\frac{\partial^{i_1+\cdots+i_s} h}{\partial y_{i_1}\cdots \partial y_{i_s}}=\frac{\partial^{i_1+\cdots+i_s} \overline{h}}{\partial y_{i_1}\cdots \partial y_{i_s}}
$$
for any indices $i_1,\ldots, i_s$.
\end{lemma}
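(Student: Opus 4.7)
My plan rests on the fact that a smooth, free and proper action of a Lie group $G$ on $M$ turns the orbit space $M/G$ into a smooth manifold of dimension $\dim M - \dim G$ in such a way that the quotient map $\pi\colon M \to M/G$ becomes a smooth submersion (in fact a principal $G$-bundle); this is the standard Quotient Manifold Theorem (see e.g.\ Lee, \emph{Introduction to Smooth Manifolds}, Thm.~21.10). I would open the argument by invoking it so that $\pi$ enters the proof as a bona fide submersion of smooth manifolds.

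Next, I would fix an arbitrary $p \in M$ and apply the local submersion theorem (the special case of the constant-rank theorem for submersions). This provides a smooth chart $(x_1,\ldots,x_n,y_1,\ldots,y_k)$ for $M$ centered at $p$ together with a smooth chart $(y_1,\ldots,y_k)$ for $M/G$ centered at $\pi(p)$, in which $\pi$ has the coordinate expression $\pi(x_1,\ldots,x_n,y_1,\ldots,y_k) = (y_1,\ldots,y_k)$. These are precisely the adapted coordinates claimed by the lemma.

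Finally, because $h$ is constant on $G$-orbits, the hypothesis amounts to the factorization $h = \overline{h}\circ \pi$. Reading this identity in the adapted coordinates immediately yields $h(x_1,\ldots,x_n,y_1,\ldots,y_k) = \overline{h}(y_1,\ldots,y_k)$, so $h$ is independent of the $x$-variables in this chart and consequently every iterated partial derivative of $h$ with respect to the $y$-variables coincides, at every point of the chart, with the corresponding derivative of $\overline{h}$.

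I do not anticipate any substantive obstacle: the entire content of the lemma is the existence of coordinates adapted to the submersion $\pi$, and once these are produced the equality of partial derivatives is a one-line computation. The only point that demands a moment's care is verifying that smoothness, freeness, and properness of the action are exactly the right combination of hypotheses to promote $M/G$ to a smooth manifold and $\pi$ to a submersion, so that the submersion theorem is in fact applicable; beyond that the argument is essentially structural.
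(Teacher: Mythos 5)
Your proof is correct and follows essentially the same route as the paper: both obtain coordinates $(x,y)$ on $M$ adapted to the quotient via Lee's Theorem 21.10, read off the identity $h(x,y)=\overline{h}(y)$ from the factorization $h=\overline{h}\circ\pi$, and conclude by differentiating in the $y$-variables. The only cosmetic difference is that you pass through the submersion normal form for $\pi$, while the paper directly uses the cubic slice coordinates produced in the proof of the Quotient Manifold Theorem.
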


\begin{proof} In Theorem 21.10 of \cite{smooth} one sees that there exist cubic coordinates $(x_1,\ldots,x_n, y_1,\ldots, y_k)$ for $M$ that intersect each $G$-orbit in either the empty set or a slice $(y_1,\ldots,y_k)=(c_1,\ldots,c_k)$ and such that $(y_1,\ldots,y_k)$ are coordinates for $M/G$. In these coordinates it is clear that
$$h(x_1,\ldots,x_n, y_1,\ldots, y_k)=\overline{h}(y_1,\ldots, y_k),$$
and the statement follows by differentiating both sides.
\end{proof}

\section{Real projective spaces} \label{realsection}
In this section we demonstrate our result for real projective spaces  $\mathbb{RP}^n$.  By construction, the space of functions on  $\mathbb{RP}^n$ (realized as $S^n/G$, with $G$ being the two element group generated by the antipodal map) is identified (via pullback) with the functions in $S^n$ for which 
 $$f(x)=f(-x).$$
Therefore, one sees that the eigenfunctions for the Laplace-Beltrami operator in this manifold are precisely the ones on $S^n$ that satisfy $f(x)=f(-x)$. That is, the eigenvalues are $\lambda_{j}=2j(2j+n-1)$, with the corresponding eigenspace being the space of homogeneous harmonic polynomials of degree $2 j$ in $n+1$ variables \cite{spectre}. The following proposition is then a detailed analysis of the first non-trivial eigenspace, whose ultimate purpose is the verification of condition {\bf(C1)} of Lemma \ref{basic}.

\begin{proposition}\label{prop1}
Let $f(x)=\sum_{i,j} a_{ij}x_i x_j$ be a real quadratic form in $n+1$ variables, with $A=(a_{ij})$ being a symmetric matrix.\footnote{It is easily seen that any quadratic form may be expressed in this symmetric fashion.}  Then $f$ is a minimal Morse function on  $\mathbb{RP}^n$ with distinct critical points if and only if the matrix $A=(a_{ij})$ has distinct eigenvalues.
\end{proposition}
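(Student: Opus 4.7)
The plan is to lift $f$ to the sphere $S^n$, where critical points of a quadratic form are accessible via Lagrange multipliers, and then descend. Since $f$ has degree two, $f(x)=f(-x)$, so $f|_{S^n}$ is invariant under the antipodal $\mathbb{Z}/2$-action and descends to $\bar f\in C^\infty(\mathbb{RP}^n)$. The antipodal action is free and proper (in fact discrete), so Lemma \ref{lie} guarantees that local computations of derivatives of $\bar f$ near $[v]\in\mathbb{RP}^n$ coincide with those of $f|_{S^n}$ near $v$. Thus the entire critical-point and non-degeneracy analysis can be done on $S^n$.

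For the forward direction, assume $A$ has distinct eigenvalues $\lambda_0<\cdots<\lambda_n$ with unit eigenvectors $v_0,\ldots,v_n$. Since $\nabla f(x)=2Ax$, Lagrange multipliers identify the critical points of $f|_{S^n}$ with unit eigenvectors of $A$, namely $\{\pm v_k\}$. Quotienting by the antipodal map, $\bar f$ has exactly $n+1$ critical points $[v_k]$, with critical values $\bar f([v_k])=v_k^{\top}Av_k=\lambda_k$, pairwise distinct by hypothesis. For non-degeneracy at $[v_k]$ I will do a short second-order expansion on $S^n$: parametrizing nearby sphere points as $(v_k+\epsilon)/\|v_k+\epsilon\|$ with $\epsilon\in v_k^\perp$ and expanding yields a Hessian of the form $2(A-\lambda_k I)|_{v_k^\perp}$, whose eigenvalues $2(\lambda_i-\lambda_k)$ ($i\neq k$) are nonzero precisely because the $\lambda_i$ are distinct. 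Minimality then follows from the $\mathbb{Z}/2$-Morse inequalities: $\sum_{i=0}^{n}b_i(\mathbb{RP}^n;\mathbb{Z}/2)=n+1$, so any Morse function on $\mathbb{RP}^n$ has at least $n+1$ critical points, and ours achieves this bound.

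For the converse, if $A$ has a repeated eigenvalue then the corresponding eigenspace $V$ has $\dim V\geq 2$, and every unit vector of $V$ is an eigenvector, hence a critical point of $f|_{S^n}$. The critical set of $\bar f$ thus contains a positive-dimensional submanifold (the image in $\mathbb{RP}^n$ of the unit sphere of $V$), so $\bar f$ cannot be Morse. Equivalently, the Hessian computation from the previous paragraph produces zero directions. This closes the equivalence. I anticipate no substantive obstacle: the only semi-computational ingredient is the spherical Hessian expansion, which is short, and the minimality bound $n+1$ is a standard consequence of the $\mathbb{Z}/2$-homology of $\mathbb{RP}^n$.
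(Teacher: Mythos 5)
Your proposal is correct and follows essentially the same route as the paper: identify the critical points of $f|_{S^n}$ with the unit eigenvectors of $A$, pass to the antipodal quotient, verify non-degeneracy by a local Hessian computation giving $2(A-\lambda_k I)|_{v_k^\perp}$, and note that $n+1$ is the minimal possible number of critical points. The only (harmless) differences are cosmetic: you work with an eigenbasis-free chart $\epsilon\mapsto (v_k+\epsilon)/\|v_k+\epsilon\|$ instead of the paper's diagonalizing coordinates, and you justify minimality via the $\mathbb{Z}/2$ Morse inequalities rather than by citing the Morse--Smale characteristic of $\mathbb{RP}^n$.
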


\begin{proof}
We may diagonalize this quadratic form through an orthogonal change of coordinates $y=Bx$, and write $f(y)=\lambda_1 y_1^2 + \cdots +\lambda_{n+1} y_{n+1} ^2$, where $\lambda_i$ are the  eigenvalues of $A$. Write $D=\text{diag}(\lambda_1,\ldots,\lambda_{n+1})$. The gradient of $f$ in these coordinates (seen as a function in $\mathbb{R}^{n+1}$) is then $\nabla{f}(y)=2Dy$.  The proof will be done once we show that $f$ has exactly $n+1$  critical points in $\mathbb{RP}^n$, all of them being non-degenerate,   if and only if $A$ has distinct eigenvalues.\footnote{Since the Morse-Smale Characteristic of $\mathbb{RP}^n$, the minimum number of critical points a Morse function can have in this space, is $n+1$ \cite{andrica}.}
Since  $\mathbb{RP}^n$ is the quotient of $S^n$ by a properly discontinuous action, it suffices to count the number $j$ of critical points for $f$ on the sphere, and then counting the resulting equivalence classes. Since every point on the sphere has orbit two under the antipodal map, the answer will be $j/2$, thus we need to show that $j=2(n+1)$. Now, the critical points of $f$ on the sphere will be those $y$ that satisfy
$$\nabla{f}(y)\cdot v=0 \text{ for all } v\in T_y S^n=\langle y \rangle ^{\perp},$$
which means 
$$2Dy \cdot v=0 \text{ for all } v\in T_y S^n=\langle y \rangle ^{\perp},$$
 or equivalently 
 $\langle y \rangle ^{\perp} \subset \langle Dy \rangle ^{\perp},$
and this just says that $Dy$ is a scalar multiple of $y$. Since $D$ is just the matrix of $A$ in the $y$ coordinates, the desired critical points are precisely the eigenvectors of the transformation $A$ that lie on the sphere.
  
If $A$ had a repeated eigenvalue, then by intersecting a two-dimensional eigenspace with the sphere we see that there would be an entire circumference of critical points, implying that $f$ is not a Morse function on the sphere (since a Morse function has only isolated critical points), so this proves the ``only if" part of the statement.
  
Now suppose the eigenvalues are all distinct, so all eigenspaces are straight lines. Then the desired critical points are precisely the $2(n+1)$ intersections of these lines with the sphere (two per line), which in our choice of coordinates are just the canonical basis vectors $\{\pm e_1,\ldots,\pm e_{n+1}\}$, so we get $j=2(n+1)$, as wanted. The critical values are distinct because they are precisely the $\lambda_i$. Now, it remains to see that these critical points $\pm e_i$ are all non-degenerate. To prove this, we compute the Hessian in local coordinates and verify that it is invertible. Fix $i$, and write $f$ in the local coordinate chart for $S^n$ around $\pm e_i$ defined by $\varphi(y_1,\ldots,y_{n+1})=(y_1,\ldots,y_{i-1},y_{i+1},\ldots,y_{n+1})=(u_1,\ldots,u_n)$. One gets:
  $$f(u_1,\ldots,u_n)=\lambda_i + \sum_{j\neq i} (\lambda_j-\lambda_i)u_j^2$$
Therefore the gradient is given by
  $$\nabla f(u)=2\text{diag}(\lambda_1-\lambda_i,\ldots,\lambda_{i-1}-\lambda_i,\lambda_{i+1}-\lambda_i,\ldots ,\lambda_{n+1}-\lambda_i)u$$
and so the Hessian of $f$ at $e_i$ in these coordinates is the matrix
  $$2\text{diag}(\lambda_1-\lambda_i,\ldots,\lambda_{i-1}-\lambda_i,\lambda_{i+1}-\lambda_i,\ldots ,\lambda_{n+1}-\lambda_i),$$
which is invertible because the $\lambda_i$ are distinct. \end{proof}   
 
This section now ends with the proof of the first of the two main results.

\begin{proof}[Proof of Theorem \ref{real}] To prove this theorem we demonstrate that conditions {\bf(C1)} and {\bf(C2)} of Lemma \ref{basic} are satisfied. By the remarks at the beginning of the section, the generic function in the $\lambda_1$-eigenspace has the form of $f$ in Proposition \ref{prop1}, with the additional condition that $\mathrm{tr}(A)=0$, or $a_{n+1,n+1}=-(a_{11}+\cdots + a_{nn})$. Therefore the following set is a basis for this eigenspace:
 $$
 \mathcal{B}=\left\{ \tfrac{1}{2}x_i x_j\, \big\vert \, 1\leq i<j \leq n+1 \right\} \cup \left\{ x_{ii}^2-x_{n+1,n+1}^2 \mid 1\leq i<n+1 \right\}.
 $$
The coefficients of $f$ for this basis (of size $(n^2+3n)/2$) are the $a_{ij}$ for $i<j$ or $i=j<n+1$. Diagonalizing $A=Q^T \text{diag}(\lambda_1,\ldots,\lambda_{n+1}) Q$ one sees that by leaving $B$ fixed and slightly varying the $\lambda_i$ (while keeping the tracelessness condition), one can make the eigenvalues distinct, so by continuity (given by the matrix equation) and Proposition \ref{prop1}, the set $B$ of condition {\bf(C1)} in Lemma \ref{basic} corresponding to our $\mathcal{B}$ is dense in $\mathbb{R}^{(n^2+3n)/2}$. 
 
To show that $B$ is open, we note first that the coefficients of the characteristic polynomial $p$ of $A$ are continuous functions of the $a_{ij}$ (in the usual sense), and the $\lambda_i$ depend continuously on the coefficients of $p$ in the following sense: if we consider disjoint $\varepsilon$-neighborhoods of the $\lambda_i$, then the zeroes of $p_t$ will, in some order (say $\beta_1,\ldots,\beta_{n+1})$, satisfy $| \lambda_i - \beta_i |<\varepsilon$, whenever $p_t$ is obtained through small enough variations of the coefficients of $p$. This is readily seen to be a corollary to Rouche's Theorem,\footnote{Rouche's Theorem is a standard complex-analytic result that, roughly speaking, states that the number of zeroes of two analytic functions inside a bounded region is invariant if these functions are close enough in the boundary of the region.} and it (together with the fact that $A$ is symmetric which forces all roots to be real) implies that if $p$ has distinct real roots, small variations of the $a_{ij}$ will not affect this property, as desired.
 
Now to check condition {\bf(C2)}, let $\overline{h}_{2j}=\pi_j(f)$ be the projection of $f$ onto the $j$th eigenspace, with $h_{2j}$ being the pulled back homogeneous harmonic polynomial of degree $2j$. By Lemma \ref{lie}, $\|\overline{h}_j\|_k = O(1)\|h_j\|_k.$ From Section 10 in \cite{garret}, we then obtain
$$
\|\overline{h}_{2j}\|_k =O(1) \binom{n+2j}{n}^{1/2}\left(1+\lambda_j^{1/2}\right)^k\|h_{2j}\|_{L^2(S^n)}.
$$
 Since both $\lambda_j=2j(2j+n-1) $ and $\binom{n+j}{n}$ are polynomials in $j$ (of degrees $2$ and $n$ respectively), and  $\|h_{2j}\|_{L^2(S^n)} \leq \|f \|_{L^2(S^n)}$, the desired inequality then follows immediately, completing the proof.
 \end{proof}
 
\section{Complex projective spaces} \label{complexsection}
This section is dedicated to proving the desired Theorem \ref{complex} for the complex projective space $\mathbb{CP}^n$. We henceforth identify $\mathbb{C}^n$ with $\mathbb{R}^{2n}$ as real vector spaces and manifolds by the correspondence
$$
(z_1,\ldots, z_n)=(x_1+\sqrt{-1} y_1, \ldots ,x_n+\sqrt{-1} y_n ) \equiv (x_1,y_1,\ldots, x_n,y_n).
$$
The Riemannian manifold $\mathbb{CP}^n$ with the Fubini-Study metric may then be realized as the quotient $S^{2n+1}/\mathrm{U}(1)$, and  the (complex-valued) eigenfunctions for the Laplacian are the eigenfunctions on the sphere that are invariant under multiplication by elements of $\mathrm{U}(1)$. The most convenient way to characterize them for the matter at hand is as the bi-homogeneous harmonic polynomials in $(z,\overline{z})$ of bi-degree $(k,k)$ ($k=0,1,2,\ldots$), with corresponding eigenvalues $4k(n+k)$ (cf. \cite{spectre}). Now, this implies that the real-valued eigenfunctions for the first eigenvalue $4(n+1)$ are precisely the functions $f$ described in the proposition below when $A$ is traceless, since the condition of being real-valued forces the matrix of coefficients to be Hermitian. This result then plays a role analogous to that of Proposition \ref{prop1} in the previous section.

\begin{proposition}\label{prop2}
Let $f\colon \mathbb{C}^{n+1} \backslash \{0\} \rightarrow \mathbb{R}$ be defined by $$f(z)=\frac{\sum_{i,j} a_{ij}z_i \overline{z_j}}{|z|^2},$$  with $A=(a_{ij})$ being a Hermitian matrix.  Then $f$ descends to a minimal Morse function with distinct critical values on  $\mathbb{CP}^n$ (realized as the quotient of $\mathbb{C}^n \backslash \{0\}$ obtained by identifying points on the same line) if and only if the matrix $A=(a_{ij})$ has distinct eigenvalues.
\end{proposition}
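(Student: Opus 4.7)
The plan is to closely mirror the proof of Proposition \ref{prop1}, replacing the real orthogonal diagonalization of symmetric matrices with the unitary diagonalization of Hermitian matrices, and replacing the antipodal quotient $S^{n} \to \mathbb{RP}^n$ with the Hopf-type quotient $S^{2n+1} \to \mathbb{CP}^n$. First I would diagonalize $A = U^* D U$ with $D = \mathrm{diag}(\lambda_1, \ldots, \lambda_{n+1})$ and the $\lambda_i \in \mathbb{R}$, and set $w = Uz$. Then $f(w) = (\sum_i \lambda_i |w_i|^2)/|w|^2$, and since the linear map $z \mapsto Uz$ descends to a biholomorphism of $\mathbb{CP}^n$ it preserves every property in the statement.

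Next, realizing $\mathbb{CP}^n$ as $S^{2n+1}/\mathrm{U}(1)$, the restriction $\tilde{f} = f|_{S^{2n+1}} = \sum_i \lambda_i |w_i|^2$ is $\mathrm{U}(1)$-invariant; by Lemma \ref{lie} the critical points of the descended function correspond exactly to the $\mathrm{U}(1)$-orbits of critical points of $\tilde{f}$ on the sphere. A Lagrange multiplier computation shows that $w \in S^{2n+1}$ is critical for $\tilde{f}$ iff $(\lambda_i - \mu)w_i = 0$ for all $i$ and some $\mu \in \mathbb{R}$, equivalently, iff $w$ lies in a single eigenspace of $A$. If some eigenvalue has multiplicity $d \geq 2$, its unit sphere $S^{2d-1}$ descends to a $\mathbb{CP}^{d-1} \subset \mathbb{CP}^n$ of critical points, violating the isolated-critical-point requirement and establishing the ``only if'' direction. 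Conversely, if all $\lambda_i$ are distinct, each eigenspace is a complex line, giving exactly $n+1$ critical points in $\mathbb{CP}^n$ with critical values $\lambda_1, \ldots, \lambda_{n+1}$, which are distinct. Since the Morse–Smale characteristic of $\mathbb{CP}^n$ is $n+1$, minimality follows once the Morse condition is established.

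For non-degeneracy at each critical point $[e_i]$, I would work in the affine chart $(w_1, \ldots, \widehat{w_i}, \ldots, w_{n+1}) \mapsto [w_1 : \cdots : 1 : \cdots : w_{n+1}]$. A direct computation followed by Taylor expansion gives
$$
f \;=\; \frac{\lambda_i + \sum_{j \neq i} \lambda_j |w_j|^2}{1 + \sum_{j \neq i}|w_j|^2} \;=\; \lambda_i + \sum_{j \neq i}(\lambda_j - \lambda_i)|w_j|^2 + O(|w|^4).
$$
Writing $w_j = u_j + \sqrt{-1}\,v_j$, the real Hessian at the origin is the block-diagonal matrix with $2 \times 2$ blocks $2(\lambda_j - \lambda_i)I_2$, which is non-singular precisely when the $\lambda_j$ are pairwise distinct, completing both directions.

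The main obstacle I anticipate is the careful bookkeeping across the three spaces $\mathbb{C}^{n+1}\setminus\{0\}$, $S^{2n+1}$, and $\mathbb{CP}^n$: in particular, the identification of critical points on the quotient with $\mathrm{U}(1)$-orbits upstairs (cleanly handled via Lemma \ref{lie}), and the fact that a repeated eigenvalue really produces a \emph{positive-dimensional} critical submanifold on $\mathbb{CP}^n$ rather than some collection of isolated degenerate points. Everything else is essentially the unitary analogue of the orthogonal argument already executed in Proposition \ref{prop1}.
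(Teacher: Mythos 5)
Your proposal is correct, and it reaches every conclusion of the paper's proof, but it locates the critical points by a genuinely different route. The paper never leaves $\mathbb{CP}^n$: it writes $f$ in each inhomogeneous chart $U_i$, computes $\mathrm{d}f$ explicitly there, and extracts the critical points from the resulting equations $x_j\bigl(\lambda_j-\lambda_i+\sum_{r\neq i}(\lambda_j-\lambda_r)(x_r^2+y_r^2)\bigr)=0$ by an inductive argument that exploits the ordering $\lambda_1\leq\cdots\leq\lambda_{n+1}$; the Hessian then comes for free from the already-computed differential. You instead pass upstairs to $S^{2n+1}$, run a Lagrange multiplier argument to see that the critical points of $f|_{S^{2n+1}}$ are exactly the unit vectors lying in a single eigenspace, and descend through the $\mathrm{U}(1)$-quotient, using Lemma \ref{lie} to match critical orbits upstairs with critical points downstairs. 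This is closer in spirit to the paper's own treatment of the real case (Proposition \ref{prop1}), makes the eigenline picture and the positive-dimensional critical locus $\mathbb{CP}^{d-1}$ in the degenerate case more transparent, and puts Lemma \ref{lie} to an extra use (the paper only needs it for the norm estimates in condition {\bf(C2)}); the one point your route must supply, and does, is that $\mathrm{U}(1)$-invariance forces the derivatives of $f|_{S^{2n+1}}$ along orbit directions to vanish automatically, so criticality upstairs and downstairs really do correspond. Your non-degeneracy check via the Taylor expansion $f=\lambda_i+\sum_{j\neq i}(\lambda_j-\lambda_i)|w_j|^2+O(|w|^4)$ yields the same block-diagonal Hessian with blocks $2(\lambda_j-\lambda_i)I_2$ and the same determinant $2^{2n}\prod_{j\neq i}(\lambda_j-\lambda_i)^2$ as the paper. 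The trade-off is that the paper's chartwise computation is entirely self-contained and elementary, while yours is shorter and more conceptual at the cost of invoking the quotient machinery.
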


\begin{proof} It is clear that $f$ is 0-homogeneous, so we may consider it as a function on $\mathbb{CP}^n$. Similarly to the proof of Proposition \ref{prop1}, the fact that $A$ is Hermitian allows us to use the Spectral Theorem to diagonalize $A$ through a unitary change of coordinates $w=Bz$. In these coordinates $f$ takes the form 
$$
f(w)=\frac{\sum_{i=1}^{n+1} \lambda_i |w_i|^2}{|w|^2},
$$
where $\lambda_i$ are the eigenvalues of $A$. Replacing $B$ with a permutation of its columns allows us to assume $\lambda_1 \leq \ldots \leq \lambda_{n+1}$. The sum of the Betti numbers for $\mathbb{CP}^n$ is $n+1$, so it suffices to show that $f$ has $n+1$ critical points on $\mathbb{CP}^n$, and that they are all non-degenerate. The sets $U_i=\{[w]\in \mathbb{CP}^n \mid w_i \neq 0\}$ together with the inhomogeneous coordinate functions $w=(w_1,\ldots,w_{i-1},w_{i+1},\ldots,w_n)$ are a family of smooth charts covering $\mathbb{CP}^n$, therefore we may proceed by computing the critical points in each $U_i$ through the use of the corresponding local coordinates. In these inhomogeneous coordinates, $f$ then takes the form:
$$f(w_1,\ldots,w_{i-1},w_{i+1},\ldots,w_n)=\frac{\lambda_i + \sum_{j \neq i} \lambda_j |w_j|^2}{1+\sum_{j\neq i}|w_j|^2}=\frac{\lambda_i + \sum_{j \neq i} \lambda_j x_j^2+\lambda_j y_j^2}{1+\sum_{j\neq i}x_j^2+y_j^2}.
$$
This gives
\begin{align*}
\mathrm{d}f =& 
-\frac{\sum_{j\neq i} 2x_j \mathrm{d}x_j + 2 y_j \mathrm{d}y_j}{\left(1+\sum_{j\neq i}x_j^2+y_j^2\right)^2}\left(\lambda_i + \sum_{j \neq i} \lambda_j x_j^2+\lambda_j y_j^2 \right)+\frac{\sum_{j \neq i} 2\lambda_j x_j \mathrm{d}x_j+\lambda_j 2y_j \mathrm{d}y_j}{1+\sum_{j\neq i}x_j^2+y_j^2}  \\ 
=& \frac{\sum_{j\neq r} 2x_j\left(\lambda_j - \lambda_i + \sum_{r\neq i} (\lambda_j - \lambda_r)(x_r^2+y_r^2)\right)\mathrm{d}x_j}{\left(1+\sum_{j\neq i}x_j^2+y_j^2\right)^2} \\
&\qquad \qquad\qquad \qquad\qquad + \frac{\sum_{j\neq r}2y_j\left(\lambda_j -\lambda_i + \sum_{r\neq i} (\lambda_j - \lambda_r)(x_r^2+y_r^2)\right)\mathrm{d}y_j}{\left(1+\sum_{j\neq i}x_j^2+y_j^2\right)^2} .
\end{align*}
So $\mathrm{d}f=0$ means that for each $j\neq i$,
$$
x_j\left(\lambda_j - \lambda_i + \sum_{r\neq i} (\lambda_j - \lambda_r)(x_r^2+y_r^2)\right)=0,
$$
and
$$
y_j\left(\lambda_j -\lambda_i + \sum_{r\neq i} (\lambda_j - \lambda_r)(x_r^2+y_r^2)\right)=0.
$$
If $\lambda_i$ was a repeated eigenvalue, say $\lambda_i=\lambda_{i+1}$, then letting $x_1=\ldots=x_{i-1}=x_{i+2}=\ldots=x_n=0$ and $y_1=\ldots=y_{i-1}=y_{i+2}=\ldots=y_{n+1}=0$ it follows that the equations above hold for any choice of $x_{i+1}$, $y_{i+1}$, so the critical points of $f$ are not isolated and therefore it is not a Morse function; the ``only if" part of the proposition is thus proved.

Conversely, suppose $\lambda_1 < \ldots < \lambda_{n+1}$. Setting $j=1$ in the previous two equations and using the fact that the $\lambda_r$ are in increasing order, it follows that $x_1=y_1=0$. Inductively one gets $x_2,y_2,\ldots,x_{i-1},y_{i-1}=0$. Similarly starting with $j=n+1$ and inducting backwards one concludes $x_j=y_j=0$ for each $j\neq i$. Hence the unique critical point of $f$ in $V_i$ is the point $[0,\ldots,1,\ldots,0]$, where the $1$ is in the $i$th position. So there are $n+1$ critical points in total, and their corresponding values are distinct (they are the $\lambda_i$). To check non-degeneracy, it follows easily from the previous computation that the determinant of the Hessian at $[0,\ldots,1,\ldots,0]$ in the $i$th inhomogeneous coordinates is just 
$$2^{2n}(\lambda_1-\lambda_i)^2\cdots (\lambda_{i-1}-\lambda_i)^2(\lambda_{i+1}-\lambda_i)^2 \cdots (\lambda_{n+1}-\lambda_i)^2,$$
which is non-zero because the $\lambda_i$ are distinct.
 \end{proof}

The previous analysis now allows us to prove the second and final theorem.

\begin{proof}[Proof of Theorem \ref{complex}] As in the proof of Theorem \ref{real}, we must verify conditions {\bf(C1)} and {\bf(C2)} of Lemma \ref{basic}. This time the generic function in the $\lambda_1$-eigenspace has the form of $f$ in Proposition \ref{prop2}, with the condition $\mathrm{tr}(A)=0$, or $a_{n+1,n+1}=-(a_{11}+\cdots + a_{nn})$. Write $a_{ij}=b_{ij}+\sqrt{-1}c_{ij}$. Since $a_{ji}=\overline{a_{ij}}$, writing 
 $$a_{ij}z_i \overline{z_j} + a_{ji}z_j \overline{z_i}=b_{ij}(z_i \overline{z_j} +z_j \overline z_i) + \sqrt{-1}c_{ij} (z_i \overline{z_j}  - z_j \overline{z_i}),$$
one sees that the dimension in this case is $n(n+2)$ elements and a basis is:
 \begin{align*}
 &\left\{ z_i \overline{z_j} + z_j \overline{z_i} \mid 1\leq i<j \leq n+1\right\} \cup \left\{\sqrt{-1}(z_i \overline{z_j} - z_j \overline{z_i})\mid 1\leq i<j \leq n+1\right\}\\
 &\qquad\cup\left\{ z_{ii}^2-z_{n+1,n+1}^2 \mid 1\leq i<n+1 \right\}.\end{align*}
 The coefficients of $f$ for this basis are the $b_{ij}$ and the $c_{ij}$ for $i<j$ or $i=j<n+1$. Noticing that these coefficients and the $a_{ij}$ are bijectively and bicontinuously related, we may diagonalize $A=Q^* \text{diag}(\lambda_1,\ldots,\lambda_{n+1}) Q$ and finish verifying the same condition {\bf(C1)} with the same argument as that of Theorem \ref{real}. Now, for condition {\bf(C2)}, the argument in Theorem \ref{real} (again appealing to Lemma \ref{basic}) gives the estimate
 $$
 \|\overline{h}_{2j}\|_k =O(1) \binom{2n+1+2j}{2n+1}^{1/2}\left(1+\lambda_j^{1/2}\right)^{k}\|h_{2j}\|_{L^2(S^{2n+1})}
 $$
 for the projection of $f$ onto the $j$th eigenspace, and the result follows in the same way since $\lambda_j=4j(n+j)$ and $\binom{2n+1+2j}{2n+1}$ are polynomials (of degrees $2$ and $2n+1$) in $j$.
\end{proof}

\begin{acknowledgements}
We sincerely thank professor Gonzalo Garc\'ia Camacho for his unconditional support and valuable advice during the development of this work.
\end{acknowledgements}

%\newpage

\addcontentsline{toc}{section}{Bibliography}
\end{document}